\providecommand{\U}[1]{\protect\rule{.1in}{.1in}}
\newtheorem{theorem}{Theorem}
\newtheorem{proposition}[theorem]{Proposition}
\newenvironment{proof}[1][Proof]{\noindent\textbf{#1.} }{\ \rule{0.5em}{0.5em}}
\numberwithin{equation}{section}
\numberwithin{equation}{section}
\numberwithin{figure}{section}
\def\bfa{{\bf a}}
\def\bfc{{\bf c}}
\def\bfu{{\bf u}}
\def\bfv{{\bf v}}
\def\bfw{{\bf w}}
\def\bfA{{\bf A}}
\def\bfB{{\bf B}}
\def\bfH{{\bf H}}
\def\bfQ{{\bf Q}}
\def\bfU{{\bf U}}
\def\bfV{{\bf V}}
\def\bfW{{\bf W}}
\def\bfX{{\bf X}}
\def\bfLa{\mbox{\boldmath$\Lambda$}}
\def\bfla{\mbox{\boldmath$\lambda$}}
\def\bfmu{\mbox{\boldmath$\mu$}}
\def\bfve{\mbox{\boldmath$\varepsilon$}}
\def\bbN{{\mathbb N}}
\def\bbR{{\mathbb R}}
\def\bbZ{{\mathbb Z}}
\def\cD{{\mathcal D}}
\def\cH{{\mathcal H}}
\def\cK{{\mathcal K}}
\def\cT{{\mathcal T}}
\def\cW{{\mathcal W}}
\newcommand{\ve}{\varepsilon}
\newcommand{\dist}{\mathop{\rm dist}\nolimits}
\renewcommand{\div}{\mathop{\rm div}\nolimits}
\begin{document}

\title{Effects of Rayleigh waves on the essential spectrum in perturbed 
doubly periodic elliptic problems}
\author{F.Bakharev\thanks{Chebyshev Laboratory, St. Petersburg State University, 14
th Line, 29b, Saint Petersburg, 199178 Russia; email: fbakharev@yandex.ru},
G.Cardone\thanks{Universit\`{a} del Sannio, Department of Engineering, Corso
Garibaldi, 107, 82100 Benevento, Italy; email: giuseppe.cardone@unisannio.it},
S.A.Nazarov\thanks{St. Petersburg State University, 198504, Universitetsky pr., 28, Stary Peterhof, Russia; Peter the Great St. Petersburg State Polytechnical University, Polytechnicheskaya ul., 29, St. Petersburg, 195251, Russia; Institute of Problems of Mechanical Engineering RAS, V.O., Bolshoj pr., 61, St. Petersburg, 199178, Russia; email:
srgnazarov@yahoo.co.uk.},
J.Taskinen\thanks{University of Helsinki, Department of Mathematics and Statistics,
P.O. Box 68, 00014 Helsinki, Finland; email: jari.taskinen@helsinki.fi.}}
\maketitle

\begin{abstract}
We give an example of a scalar second order differential operator in the plane with double periodic 
coefficients and describe its modification, which causes an additional spectral band in the essential 
spectrum. The modified operator 
is obtained by applying to the coefficients a mirror reflection with respect to a vertical or 
horizontal line. This change gives rise to Rayleigh type waves localized near the line. The results 
are proven using asymptotic analysis, and they are based on high contrast of the coefficient 
functions.
\medskip

Keywords: periodic media, open waveguides, high contrast of coefficients, asymptotics, spectral bands.

\medskip

MSC: Primary 35P05; Secondary 47A75.

\end{abstract}






\section{Introduction}
\label{sec1}

\subsection{Motivation.}
\label{sec1.1}

A satisfactory theory for spectral
elliptic boundary-value problems in  double periodic media containing open waveguides
does not exist yet, and the topic contains a lot of unanswered questions. An open waveguide
consists of a semi-infinite foreign inclusion, cf. Fig.\,\ref{fig1}, and being a non-compact
domain perturbation, it can in general change the essential spectrum of the problem, when 
compared to the corresponding problem on an intact  domain without perturbation. This topic was studied for example in the recent  paper \cite{CaNaTa}, which contains a complete 
description of the 
essential spectrum $\sigma_{\rm ess}(\cT)$ for a large class of elliptic second order systems
with   Neumann boundary conditions, satisfying a Korn inequality. 
The following question\footnote{The question has been asked by a referee of the paper  \cite{CaNaTa}, among others} has arisen in the course of the investigation:
is the formula
\begin{equation}
\label{5}
\sigma_{\rm ess}(\mathcal{T})=\sigma_{\rm ess}^-\cup\sigma_{\rm ess}^+
\end{equation}
valid for an elliptic problem in the union of 
two subdomains of the plane, which are contained in the lower and upper half-planes;
here, $\sigma_{\rm ess}^-$ (respectively, $\sigma_{\rm ess}^+$) 
denotes the essential spectrum of the corresponding problem in the lower (resp. upper) half-plane,
and it is assumed that these two problems are  periodic  along the abscissa axis, but
independently of each other?
Obviously, the interesting aspect in this problem is to find a possible component of 
$\sigma_{\rm ess} (\mathcal{T})$ which is not contained in the spectra of the problems in the subdomains.

\begin{figure}[t]
\begin{center}
\includegraphics[scale=0.4]{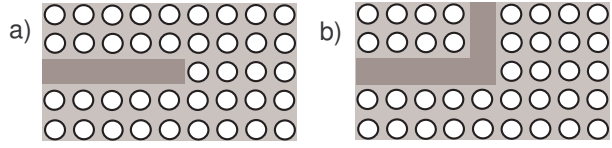}
\end{center}
\caption{Semi-infinite (a) and angular (b) open wave\-guides in double-periodic planar domains.}
\label{fig1}
\end{figure}

As additional motivation of the problem we recall the case of the one-dimensional
Schr\"odinger equation
\begin{equation}
\label{1}
-\partial^2_x w(x)+V(x) w(x)=\lambda w(x), \quad x\in \bbR=(-\infty,+\infty)
\end{equation}
with the composite potential
\begin{equation}
\label{2}
V(x)=V^{\pm}(x)\quad \mbox{for} \quad \pm x>\ell>0 ,
\end{equation}
where $\partial_x=\partial/{\partial x}$ and $V^\pm$ are 1-periodic positive smooth
functions; smoothness is assumed here for the sake of simplicity. The essential spectrum
$\sigma_{\rm ess}$ of the problem \eqref{1} is just the union of the spectra $\sigma_{\rm ess}^\pm$ of the
differential operators $-\partial_x^2+V^\pm$ with periodic coefficients in the whole axis $\bbR$. This fact is evident
because the equation can be reformulated as s system of ordinary differential equations
\begin{equation}
\label{3}
-\partial_x^2w^\pm(x)+V(x)w^\pm(x)=\lambda w^\pm (x), \quad x\in\bbR_\pm
=\{x\in\bbR : \pm x>0\},
\end{equation}
with  transmission conditions
\begin{equation}
\label{4}
w^+(+0)=w^-(-0),\quad \partial_x w^+(+0)=\partial_x w^{-}(-0)\,.
\end{equation}
Indeed, according to \eqref{2}, the essential spectrum of \eqref{3} with  Dirichlet conditions $w^\pm(0)=0$
is nothing but $\sigma_{\rm ess}^\pm$, while the system \eqref{3},
\eqref{4} differs from the couple of the Dirichlet problems in $\bbR_\pm$
by a localized perturbation (it can be interpreted as a compact perturbation).

\begin{figure}[t]
\begin{center}
\includegraphics[scale=0.4]{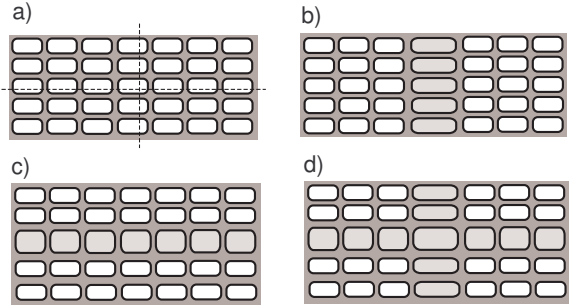}
\end{center}
\caption{Double-periodic planar domain (a) and composite domains (b-d) created by mirror
reflections.}
\label{fig2}
\end{figure}

Coming back to the problem \eqref{5}, the above described argument based on  a compact perturbation works no longer, since the interface 
$\partial \bbR^2_\pm$, where $\bbR^2_\pm = \{ (x_1,x_2) \in \bbR^2 \, :\, \pm  x_1 \geq 0 \} $, is infinite. 
However, there is no easy,
satisfactory counterexample to the relationship \eqref{5}, if the natural
requirements like  
smoothness of the coefficient are to  be satisfied; this prevents answering the question
directly by using classical Rayleigh waves \cite{Ray}, \cite{SO} in elasticity and their 
generalizations, see \cite{KamKisel}.

In the present paper we give examples of  elliptic
scalar equations with smooth double periodic coefficients, which have the following
property:
if the plane is divided along dotted lines in Fig.\,\ref{fig2}.a) and
the left  or upper  half-plane is doubled by  using  mirror reflection,
Fig.\,\ref{fig2}.b) or c), the new  elliptic problem 
gains  essential spectrum $\sigma_{\rm ess}(\cT)$ with  at least one additional spectral band
in comparison with the original spectrum $\sigma_{\rm ess}(\cT^0)$ of the
double periodic problem. This main result of our paper is formulated in Theorem \ref{thmain} 
in Section \ref{sec3}. We also mention that the result of \cite{CaNaTa}
is proven in two steps: the first consists of finding a
singular Weyl sequence at any point $\lambda\in\sigma_{\rm ess}(\cT)$ for the problem operator
$\cT$ and the second of   the construction of a (right) parametrix for the problem
with any  $\lambda \notin \sigma_{\rm ess}(\cT)$. It has been asked, if it is possible to 
avoid the quite technical and cumbersome construction of the parametrix, 
like it has been done in the case of the one-dimensional Schr\"odinger equation.
The present paper also demonstrates the complications in this respect.

To fulfill the task, we employ an  elegant formulation  of \cite{Hempel},
see also \cite{HeO,Zhik, na461}, on the  detection of spectral gaps in scalar
problems, where the  coefficients of the differential operator have high contrast.
However, we were not able to apply these results directly, and modifications 
are presented in  Section \ref{sec3} in order to satisfy all natural 
assumptions. In particular, we find a way to keep the 
infinite smoothness of the  coefficients; note that in \cite{Hempel, HeO, Zhik, na461}
the coefficients have to be piecewise constant. In particular, as is drafted in
Fig.\,\ref{fig2}.a), the massive  hard parts of the double  periodic medium are separated
by thin, soft "mortar"
like in  hand-made masonry (similar structure appears also in natural quarzites).
Compared with the   citations, especially \cite{na461} where a similar geometric
structure was employed for a different purpose, we use 
quite a different scheme of asymptotic analysis, which also leads  to new   asymptotic
results about the purely periodic case in Section \ref{sec2} (Theorem \ref{th1}). In order
to clarify the proof  of our main result we will accept some simplifying assumptions. 
Possible generalizations will be  discussed in Sect. \ref{sec4}.

\subsection{Purely periodic medium.}
\label{sec1.2}
We now describe the double  periodic elliptic second order partial differential equation
which will be investigated in Section \ref{sec2}. The main 
example of the failure
of the equality \eqref{5} for the composite medium $\bbR_+^2\cup \bbR_-^2$ will be constructed in Section \ref{sec3}.

We define the period cell as the rectangle $Q=(-\ell_1,\ell_1)\times (-\ell_2,\ell_2)$
with $\ell_1\geq \ell_2>0$. For $\ve\in (0,\ell_2)$ we introduce a smaller rectangle
$Q_{\ve}=(-\ell_1+\ve,\ell_1-\ve)\times (-\ell_2+\ve,\ell_2-\ve)$.
Let us also define a family of translated domains
$$
Q_{\ve}(\alpha)=\{x=(x_1,x_2): (x_1-2\ell_1\alpha_1,x_2-2\ell_2\alpha_2)\in Q_{\ve}\}
$$
where $\alpha=(\alpha_1,\alpha_2)\in \bbZ^2$ and $\bbZ=\{0,\pm1,\pm2,\ldots\}$.

We consider the spectral problem
\begin{equation}
\label{6}
-\div(a^\ve(x) \nabla_x u^\ve (x))=\lambda^\ve u^\ve(x),\quad x\in \bbR^2,
\end{equation}
where $\nabla_x$ is the gradient in the variable $x$  and $\lambda^\ve$ is a spectral parameter. The function $a^\ve$ is smooth and
$2\ell_j$-periodic in $x_j$ such that
\begin{equation}
\label{7}
a^\ve(x)=1, \quad  x\in Q_{2\ve},\quad a^\ve(x)=\ve^{2\gamma}, \quad x\in Q\setminus \overline{Q_\ve},
\end{equation}
and $a^\ve(x)\in (\ve^{2\gamma},1]$ if $x\in Q_{\ve}\setminus Q_{2\ve}$, where
 $\gamma \in (1/2,1)$ is a fixed parameter.

The variational formulation of the problem \eqref{6} reads as
\begin{equation}
\label{8}
(a^\ve \nabla_x u^\ve, \nabla_x v)_{\bbR^2}=\lambda^\ve (u^\ve,v)_{\bbR^2}, \quad v\in H^1(\bbR^2)
\end{equation}
where $(f,g)_\Omega$ stands for the usual (complex valued) inner product in $L^2(\Omega)$
for  a domain   $\Omega \subset \bbR^2$.
We denote the standard Sobolev space by $H^1(\Omega)$. The sesquilinear form
on the left of \eqref{8} is positive and closed in $H^1(\bbR^2)$ and consequently (see \cite[Ch. 10]{BiSo}, \cite[Thm. VIII.5]{RS1})
our problem can be rewritten as an abstract operator equation  $\cT^0(\ve) u^\ve=\lambda^\ve u^\ve$,
where $\cT^0(\ve)$ is an unbounded positive self-adjoint operator in Hilbert space $L^2(\bbR^2)$ with
domain $\cD(\cT^0(\ve))=H^2(\bbR^2)$,
and thus the spectrum $\sigma(\cT^0(\ve))$ is a subset of the semi-axis $\overline{\bbR_+}=[0,+\infty)$.
The embedding $H^1(\bbR^2)\subset L^2(\bbR^2)$ is not compact, hence the essential
spectrum $\sigma_{\rm ess}(\cT^0(\ve))$ is not empty.

\section{Asymptotic analysis of the spectrum of the purely periodic problem}
\label{sec2}

\subsection{FBG-transform and  model problem in the period cell.}
\label{sec2.1}
The Floquet-Bloch-Gelfand-(FBG-)transform, see \cite{Gel} and also \cite{DS,KuchUMN,Skrig,Kuchbook}, converts the differential equation \eqref{6} into the following  problem
with quasiperiodic boundary conditions in the period cell $Q$,
\begin{eqnarray}
\label{15}
&&-\div (a^\ve(x)\nabla_x U^\ve(x;\eta))=\Lambda^\ve(\eta)U^\ve(x;\eta),\quad x\in Q,
\\
\label{16}
&&U^\ve(x;\eta)|_{x_j=\ell_j}=e^{i\eta_j} U^\ve(x;\eta)|_{x_j=-\ell_j}, \quad |x_{3-j}|<\ell_{3-j},
\\
\label{17}
&&\partial_j U^\ve(x;\eta)|_{x_j=\ell_j}=e^{i\eta_j} \partial_j U^\ve(x;\eta)|_{x_j=-\ell_j}, \quad |x_{3-j}|<\ell_{3-j},
\end{eqnarray}
where $j=1,2$, $\partial_j=\partial/\partial x_j$ and $\eta=(\eta_1,\eta_2)$ is the Floquet parameter
in the closed rectangle $R=[0,\pi \ell_1^{-1}]\times [0,\pi\ell_2^{-1}]$. In the sequel we do not 
always display the dependence on $\eta$ explicitly. The problem has  the variational formulation
\begin{equation}
\label{18}
(a^\ve \nabla_x U^\ve, \nabla_x  V)_Q=\Lambda^\ve(\eta) (U^\ve, V)_Q \quad \forall V\in H^1_\eta(Q),
\end{equation}
where $H^1_\eta(Q)$ is the Sobolev space of functions satisfying the conditions \eqref{16}.
The  bilinear form on the left of \eqref{18} is positive and closed in
$H^1(Q)$.
Hence, since the embedding $H^1(Q)\subset L^2(Q)$ is compact, the spectrum of the problem \eqref{18}
or \eqref{15}-\eqref{17} is discrete and turns into the monotone unbounded sequence
\begin{equation}
\label{19}
0\leq\Lambda_1^\ve(\eta)\leq \Lambda_2^\ve(\eta)\leq \ldots \leq\Lambda_k^\ve(\eta)\leq\ldots\to +\infty ,
\end{equation}
and  the corresponding eigenfunctions $U_{1}^\ve(\cdot;\eta)$, $U_{2}^\ve(\cdot;\eta)$, \ldots
can be subject to the normalization and orthogonality conditions
\begin{equation}
\label{20}
(U_{j}^\ve, U_{k}^\ve)_Q=\delta_{j,k}, \quad j,k\in\bbN,
\end{equation}
where $\delta_{j,k}$ is the Kronecker symbol.

The functions ${R}\ni \eta \mapsto \Lambda_k^\ve(\eta)$ are continuous and $\pi \ell_j^{-1}$-periodic
in the variable $\eta_j$, cf. \cite[Ch.VII]{Kato}. As was verified for example in
\cite{KuchUMN,Skrig,Kuchbook},
the spectrum of the problem \eqref{6} or \eqref{8} has band-gap structure,
\begin{equation}
\label{21}
\sigma(\cT^0(\ve))=\bigcup_{k\in\bbN}\beta_k^\ve, \quad \beta_k^\ve=
\big\{\Lambda_k^\ve(\eta)|\eta\in R \big\},
\end{equation}
where the sets $\beta_k^\ve$ are closed finite intervals. Our actual objective is
to describe the sets in \eqref{21} asymptotically as $\ve\to +0$.

\subsection{Limit model problem and theorem on asymptotics.}
\label{sec2.2}
We will next study the relation of the  eigenvalues  \eqref{19} and the spectrum of the so-called 
limit problem
\begin{eqnarray}
\label{B1}
&&-\Delta_x w(x)=\mu w(x),\quad x\in Q,
\\
\label{B2}
&&\partial_n w(x)=0,\quad x\in\partial Q,
\end{eqnarray}
where
$\Delta_x$ is the Laplace operator in the variables $x$ and $\partial_n$ is the outward normal derivative.
The problem \eqref{B1}, \eqref{B2} can be solved explicitly.
Its spectrum consists of the eigenvalue sequence
$\{\mu_n\}_{n\in\bbN}=\big\{\frac{\pi^2}{4}(j^2\ell_1^{-2}+k^2\ell_2^{-2})\big\}_{j,k\in
\bbN\cup\{0\}}$, which is  indexed taking into account  multiplicities such that 
\begin{equation}
\label{B5}
0=\mu_1<\mu_2\leq\mu_{3}\leq\ldots\leq\mu_n\leq\ldots\to+\infty .
\end{equation}
To simplify forthcoming calculations  we assume that $\ell_1^2 \ell_2^{-2}$ is not rational. This guarantees  that
all eigenvalues in \eqref{B5} are simple.
The corresponding eigenfunctions
\begin{equation}
\label{wn}
w_{n}(x)=c_{jk}\cos (\pi (2\ell_1)^{-1} j (x_1+\ell_1))\cos (\pi (2\ell_2)^{-1}(x_2+\ell_2)),
\end{equation}
with $c_{jk}^2 =(1+\delta_{j,0})(1+\delta_{k,0})(\ell_1\ell_2)^{-1}$ satisfy the normalization and orthogonality conditions
$(w_{n},w_{m})_Q=\delta_{n,m}$, $n,m\in\bbN$.

We note that the problem \eqref{B1}-\eqref{B2} has  the variational form
\begin{equation}
\label{B4}
(\nabla_x w, \nabla_x v)_Q=\mu (w,v)_Q \quad \forall v\in H^1(Q)\,.
\end{equation}

The main result in Section \ref{sec2} is the following assertion, the proof of which will be
completed in Section \ref{sec2.4}.

\begin{theorem}\label{th1}
For every $n\in \bbN$, there exist positive $\ve_n$ and $c_n$ such that the 
eigenvalues \eqref{19} and \eqref{B5} are related by
\begin{equation}
\label{BB}
|\Lambda_n^\ve(\eta)-\mu_n|\leq c_n \ve^{\gamma-1/2} \quad \mbox{for} \quad \ve\in (0, \ve_n].
\end{equation}
\end{theorem}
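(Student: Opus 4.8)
The plan is to obtain the two-sided bound \eqref{BB} by comparing, through the Courant--Fischer max--min principle, the Rayleigh quotients of the quasiperiodic form $u\mapsto(a^\ve\nabla_x u,\nabla_x u)_Q$ on $H^1_\eta(Q)$ with those of the Neumann form $w\mapsto(\nabla_x w,\nabla_x w)_Q$ on $H^1(Q)$, whose $n$-th eigenvalue is exactly $\mu_n$. The ``soft'' collar, where $a^\ve$ takes its small value $\ve^{2\gamma}$, lies against $\partial Q$ and therefore asymptotically severs the quasiperiodic gluing; this is reflected by the fact that every inequality used below holds for all $u\in H^1(Q)\supset H^1_\eta(Q)$, so that the bounds are automatically uniform with respect to the Floquet parameter $\eta\in R$.

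For the upper bound I would use the $n$-dimensional trial space spanned by $\chi_\ve w_1,\dots,\chi_\ve w_n$, where $w_k$ are the explicit Neumann eigenfunctions \eqref{wn} and $\chi_\ve\in C^\infty(\overline Q)$ is a cut-off with $\chi_\ve\equiv1$ on $\overline{Q_\ve}$, $\chi_\ve|_{\partial Q}=0$, and $|\nabla_x\chi_\ve|\le C\ve^{-1}$, supported in the outermost strip $Q\setminus\overline{Q_\ve}$, where $a^\ve\equiv\ve^{2\gamma}$. Since $\chi_\ve$ vanishes on $\partial Q$, each $\chi_\ve w_k$ satisfies \eqref{16} trivially, so the trial space lies in $H^1_\eta(Q)$ for every $\eta$. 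On $Q_{2\ve}$ one has $a^\ve=\chi_\ve=1$ and $\nabla_x(\chi_\ve w_k)=\nabla_x w_k$; on the strip $Q_\ve\setminus\overline{Q_{2\ve}}$, which has measure $O(\ve)$, the integrand is $\le|\nabla_x w_k|^2$; and on $Q\setminus\overline{Q_\ve}$ the Leibniz rule for $\nabla_x(\chi_\ve w_k)$ together with $a^\ve=\ve^{2\gamma}$, $|\nabla_x\chi_\ve|\le C\ve^{-1}$ and the boundedness of $w_k$ gives a contribution $O(\ve^{2\gamma}\cdot\ve^{-2}\cdot\ve)=O(\ve^{2\gamma-1})$. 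Consequently the Gram matrix of the $\chi_\ve w_k$ in $L^2(Q)$ is $\bbI+O(\ve)$ (hence invertible for small $\ve$) and their stiffness matrix for $(a^\ve\nabla_x\cdot,\nabla_x\cdot)_Q$ is $\diag(\mu_1,\dots,\mu_n)+O(\ve^{2\gamma-1})$, so the generalized max--min yields $\Lambda_n^\ve(\eta)\le\mu_n+c_n\ve^{2\gamma-1}$.

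For the lower bound I would take an arbitrary $n$-dimensional subspace $E\subset H^1_\eta(Q)$ and restrict its elements to $Q_{2\ve}$. If the restriction is not injective on $E$, there is $u\in E\setminus\{0\}$ supported in $S_\ve:=Q\setminus\overline{Q_{2\ve}}$; then $u\in H^1_0(S_\ve)$, the Friedrichs inequality on the $O(\ve)$-thin set $S_\ve$ gives $\|u\|_{L^2(S_\ve)}^2\le C\ve^2\|\nabla_x u\|_{L^2(S_\ve)}^2$, and since $a^\ve\ge\ve^{2\gamma}$ the Rayleigh quotient of $u$ is $\ge c\ve^{2\gamma-2}$, which exceeds $\mu_n$ for small $\ve$. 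Otherwise the restrictions span an $n$-dimensional subspace of $H^1(Q_{2\ve})$; applying the max--min for the Neumann Laplacian on the rectangle $Q_{2\ve}$, whose $n$-th eigenvalue satisfies $\mu_n(Q_{2\ve})\ge\mu_n$, we obtain $u^\ast\in E$ with $\|\nabla_x u^\ast\|_{L^2(Q_{2\ve})}^2\ge\mu_n\|u^\ast\|_{L^2(Q_{2\ve})}^2$. It remains to bound the $L^2$-mass that $u^\ast$ can hide in the collar; the chain
\[
\|u^\ast\|_{L^2(S_\ve)}^2\le C\ve\,\|u^\ast\|_{L^2(\partial Q_{2\ve})}^2+C\ve^2\|\nabla_x u^\ast\|_{L^2(S_\ve)}^2,\qquad
\|u^\ast\|_{L^2(\partial Q_{2\ve})}^2\le C\big(\|u^\ast\|_{L^2(Q_{2\ve})}^2+\|\nabla_x u^\ast\|_{L^2(Q_{2\ve})}^2\big),
\]
combined with $\|\nabla_x u^\ast\|_{L^2(S_\ve)}^2\le\ve^{-2\gamma}(a^\ve\nabla_x u^\ast,\nabla_x u^\ast)_{S_\ve}$ and $\ve^2\cdot\ve^{-2\gamma}=\ve^{2-2\gamma}\to0$, shows $\mu_n\|u^\ast\|_{L^2(S_\ve)}^2\le(a^\ve\nabla_x u^\ast,\nabla_x u^\ast)_{S_\ve}+c_n\ve\,\|u^\ast\|_{H^1(Q_{2\ve})}^2$ for small $\ve$. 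Adding the estimate on $Q_{2\ve}$, using $a^\ve=1$ there, and dividing by $\|u^\ast\|_{L^2(Q)}^2=\|u^\ast\|_{L^2(Q_{2\ve})}^2+\|u^\ast\|_{L^2(S_\ve)}^2$ gives $(a^\ve\nabla_x u^\ast,\nabla_x u^\ast)_Q\ge(\mu_n-c_n\ve)\|u^\ast\|_{L^2(Q)}^2$, hence $\Lambda_n^\ve(\eta)\ge\mu_n-c_n\ve$.

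Putting the two directions together, $|\Lambda_n^\ve(\eta)-\mu_n|\le c_n\max\{\ve,\ve^{2\gamma-1}\}\le c_n\ve^{\gamma-1/2}$ for $\ve\in(0,\ve_n]$ with $\ve_n\le1$, since $\gamma\in(1/2,1)$ gives $2\gamma-1\ge\gamma-1/2$ and $1\ge\gamma-1/2$. I expect the lower bound to be the main technical obstacle: one must guarantee that, although $a^\ve$ degenerates on the collar $S_\ve$, the $L^2$-mass a quasiperiodic function can accumulate there is negligible, and this is precisely the weighted Friedrichs/Poincar\'e estimate on the $O(\ve)$-thin set $S_\ve$ together with the trace on $\partial Q_{2\ve}$ — here the factor $\ve^2$ coming from thinness must beat the factor $\ve^{-2\gamma}$ coming from the contrast, which forces $\gamma<1$, while the cut-off error $\ve^{2\gamma-1}$ in the upper bound is small only for $\gamma>1/2$. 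Some care is also needed near the corners of $Q$ (and along the identified edges, on the torus obtained by the quasiperiodic gluing) when writing the collar in fibered coordinates, but this is routine once a tubular parametrization of $\partial Q_{2\ve}$ is fixed.
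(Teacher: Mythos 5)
Your proposal is correct in substance, but it follows a genuinely different route from the paper. The paper proves Theorem \ref{th1} in two stages: first (Section \ref{sec2.3}) a compactness/convergence step, in which the rescaled eigenfunctions \eqref{UUn} are shown to converge along subsequences and the limit pairs are identified as Neumann eigenpairs of \eqref{B1}, \eqref{B2} with normalized limit eigenfunction (Proposition 2.2); then (Section \ref{sec2.4}) a ``near-eigenvalue'' resolvent argument: with the operator $\cK^\ve$ of \eqref{N2} and a test element built from the cut-off, dilated Neumann eigenfunction, the discrepancy \eqref{N6} is estimated through the three terms \eqref{NII} to be $O(\ve^{\gamma-1/2})$, so by \eqref{KK} and \eqref{N4} \emph{some} eigenvalue of \eqref{19} lies within $C_n\ve^{\gamma-1/2}$ of $\mu_n$; this eigenvalue is finally identified with $\Lambda_n^\ve(\eta)$ by a counting argument that uses the convergence step together with the simplicity of the $\mu_n$ (the irrationality of $\ell_1^2\ell_2^{-2}$). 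You instead run a direct two-sided Courant--Fischer comparison: cut-off Neumann eigenfunctions as a trial space for the upper bound, and, for the lower bound, restriction to $Q_{2\ve}$ combined with the thin-frame Friedrichs/trace estimate (essentially the paper's own inequality \eqref{F10}). Your scheme avoids subsequence extraction, never uses simplicity of the limit eigenvalues (so the irrationality assumption plays no role in \eqref{BB}), is uniform in $\eta$ by construction, and actually yields the sharper bound $\mu_n-c_n\ve\le\Lambda_n^\ve(\eta)\le\mu_n+c_n\ve^{2\gamma-1}$, which implies \eqref{BB}. What the paper's scheme buys is reusability: the same almost-eigenvalue computation is recycled verbatim for the strip problem in Section \ref{sec3.3} (Theorem \ref{ThL}), where one only needs the \emph{existence} of an eigenvalue near $\bfmu_2$, not a two-sided control of the $n$-th eigenvalue.

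Three small repairs to your lower bound. First, when the restriction to $Q_{2\ve}$ fails to be injective, the resulting $u$ vanishing on $Q_{2\ve}$ is generally \emph{not} in $H^1_0(S_\ve)$: it satisfies only the quasiperiodicity conditions on $\partial Q$, not a Dirichlet condition there; however it does vanish on the inner boundary $\partial Q_{2\ve}$, and that alone gives the Friedrichs inequality with constant $C\ve^2$ on the frame of width $O(\ve)$, so the conclusion stands. Second, $\mu_n(Q_{2\ve})\ge\mu_n$ is not an instance of domain monotonicity (which fails for Neumann problems); justify it from the explicit spectrum of the rectangle, namely $\frac{\pi^2}{4}\big(j^2(\ell_1-2\ve)^{-2}+k^2(\ell_2-2\ve)^{-2}\big)$, each member of which dominates the corresponding member $\frac{\pi^2}{4}(j^2\ell_1^{-2}+k^2\ell_2^{-2})$, so the ordered sequences compare termwise. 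Third, in the final division the error term $c_n\ve\,\|\nabla_x u^\ast\|^2_{L^2(Q_{2\ve})}$ is not dominated by $c_n\ve\,\|u^\ast\|^2_{L^2(Q)}$; since it carries the factor $\ve$ it should be absorbed into the left-hand side, giving $(1+c_n\ve)\,(a^\ve\nabla_x u^\ast,\nabla_x u^\ast)_Q\ge(\mu_n-c_n\ve)\,\|u^\ast\|^2_{L^2(Q)}$, which yields the same conclusion $\Lambda_n^\ve(\eta)\ge\mu_n-c_n'\ve$ for small $\ve$.
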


\subsection{Convergence theorem and identification of spectral gaps.}
\label{sec2.3}
Let us denote by $\mu_n^D$  the $n$th eigenvalue (ordered as in \eqref{B5})
for the Dirichlet problem in $Q$, consisting of
the differential equation \eqref{B1} and the  boundary condition $w=0$ on $\partial Q$
instead of \eqref{B2}.
By the max-min principle, see e.g., \cite[Thm. 10.2.2]{BiSo}, \cite[Thm. XIII 1,2]{RS2} we readily
conclude that
$\Lambda_n^\ve(\eta)\leq \mu_n^D$.
Then, for the eigenfunction $U_n^\ve$ of the problem \eqref{15}-\eqref{17}, we have
\begin{eqnarray}
\label{F1}
& & \|\nabla_x U_n^\ve;L^2(Q_{2\ve})\|^2+\|\sqrt{a^\ve}\nabla_x U_n^\ve;L^2(Q_{\ve}
\setminus Q_{2\ve} )\|^2
\nonumber \\
& & +  \ve^{2\gamma}\|\nabla_x U_n^\ve;L^2(Q\setminus Q_\ve)\|^2\leq \mu_n^D.
\end{eqnarray}
Denoting the coordinate dilation by $A_{\ve}x =((1-2\ve\ell_{1}^{-1})x_1,(1-2\ve \ell_2^{-1})x_2)$,
the $H^1(Q)$-norm of the function
\begin{equation}
\label{UUn}
{\bf U}_n^\ve(x; \eta)=U_n^\ve(A_\ve x;\eta)
\end{equation}
is uniformly bounded with respect to  $\ve\in (0,1]$ and $\eta\in R$.
Hence, for some  positive  sequence $\{\ve_p\}_{p\in\bbN}$ converging to 0, we have

\begin{equation}
\label{FL}
\Lambda_n^{\ve_p} (\eta) \to \Lambda_n^0(\eta), \quad
{\bf U}_n^{\ve_p} \rightharpoondown {\bf U}_n^0 \quad \mbox{as } p \to \infty,
\end{equation}
where the latter convergence happens weakly in $H^1(Q)$ and strongly in $L^2(Q)$.

Let   $v^0$ be an arbitrary smooth function in $\overline{Q}$ and set
\begin{equation}
\label{F3}
v^\ve(x)=X^\ve(x) v^0(A_\ve^{-1}x),
\end{equation}
where $X^{\ve} : Q \to [0,1]$ is a smooth cut-off function such that
\begin{equation}
\label{FX}
X^\ve=1 \mbox{ in } Q_\ve,\quad X^\ve=0 \mbox{ in } Q\setminus Q_{\ve/2}, \
\mbox{ and } \ |\nabla_x X^\ve|\leq C_X \ve^{-1} \mbox{ in } Q.
\end{equation}
Since $X^\ve=0$ near $\partial Q$, the function \eqref{F3}
satisfies the quasiperiodicity conditions \eqref{16} and therefore can be inserted into
the integral identity \eqref{18}:
\begin{equation}
\label{F4}
(a^\ve \nabla_x U_n^\ve, \nabla_x v^\ve)_Q=\Lambda_n^\ve(\eta) (U_n^\ve, v^\ve)_Q.
\end{equation}
Here we have
\begin{eqnarray}
\Lambda_n^\ve(\eta) (U_n^\ve, v^\ve)_Q \to \Lambda_n^0(\eta) ({\bf U}_n^0, v^0)_Q
\quad \mbox{as } \ve \to 0,
\label{F4a}
\end{eqnarray}
because, first,
\begin{eqnarray}
\label{F5}
& & (U_n^\ve, v^\ve)_{Q_{2\ve}}=\int_{Q_{2\ve}}{\bf U}_n^\ve(A_\ve^{-1}x)
\overline{v^0(A_\ve^{-1}x)}dx
\nonumber \\
&=&
(1-2\ve \ell_1^{-1})(1-2\ve\ell_2^{-1})({\bf U}_n^\ve,v^0)_Q \to ({\bf U}_n^0, v^0)_Q
\end{eqnarray}
and, second,
$$
\big|(U_n^\ve, v^\ve)_{Q\setminus Q_{2\ve}}\big|\leq c(v^0) \|U_n^\ve; L^2(Q)\| \, |Q\setminus Q_{2\ve}|^{1/2}\leq c_n(v^0)\sqrt{\ve} ,
$$
where we take into account  the normalization condition \eqref{20}, the boundedness of the
function $v^0$ and the area $|Q\setminus Q_{2\ve}|=O(\ve)$ of the integration domain $q\setminus Q_{2\ve}$.
A transformation similar to \eqref{F5}  shows that
\begin{eqnarray}
\label{F6}
(a^\ve \nabla_x U_n^\ve, \nabla_x v^\ve)_{Q_{2\ve}} \to (\nabla_x {\bf U}_n^0,\nabla_x v^0)_Q
\end{eqnarray}
because $a^\ve=1$ on $Q_{2\ve}$. Moreover,
\begin{eqnarray}
& & (a^\ve \nabla_x U_n^\ve, \nabla_x v^\ve)_{Q_{\ve}\setminus Q_{2\ve}}=
(a^\ve \nabla_x U_n^\ve, \nabla_x (v^0\circ A_\ve^{-1}))_{Q_{\ve}\setminus Q_{2\ve}}
\nonumber \\
& \leq &
\|\sqrt{a^\ve} \nabla_x U_n^\ve;L^2(Q_{\ve}\setminus Q_{2\ve})\|\,
\|\sqrt{a^\ve}\nabla_x (v^0\circ A_\ve^{-1});L^2(Q_{\ve}\setminus Q_{2\ve})\|\\
&\leq &\mu_n^D  c_n(v) \ve^{1/2}\,.
\nonumber
\end{eqnarray}
Finally,
\begin{eqnarray}
(a^\ve \nabla_x U_n^\ve, \nabla_x v^\ve)_{Q\setminus Q_\ve}&\leq&
\ve^{2\gamma}\|\nabla_x U_n^\ve;L^2(Q)\|\, \|\nabla_x v^\ve; L^2(Q\setminus Q_\ve)\|
\nonumber \\
& \leq &
\sqrt{\mu_n^D} \ve^{\gamma} C_X\ve^{-1} c_v \ve^{1/2}\leq C_n(v) \ve^{\gamma-1/2}\,.
\label{F7}
\end{eqnarray}
Here, we have used \eqref{F1} to estimate the norm of $\nabla_x U_n^\ve$ and \eqref{F3}, \eqref{FX}
for $\nabla_x v^\ve$. Since $\gamma>1/2$, formulas \eqref{F6}--\eqref{F7}
imply
\begin{eqnarray}
\label{F8}
(a^\ve \nabla_x U_n^\ve, \nabla_x v^\ve)_{Q} \to (\nabla_x {\bf U}_n^0,\nabla_x v^0)_Q
\quad \mbox{as } \ve \to 0.
\end{eqnarray}

We formulate the following  result of our calculations.

\begin{proposition}
For every $n\in \bbN$, the limit $\lambda_n^0(\eta)$  in \eqref{FL} is an eigenvalue 
of the Neumann problem \eqref{B1}, \eqref{B2}, and $\bfU_n^0$ in \eqref{FL} is the
corresponding eigenfunction   with normalization $\|{\bf U}_n^0;L^2(Q)\|=1$.
\end{proposition}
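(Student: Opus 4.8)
The plan is to pass to the limit $\ve = \ve_p \to 0$ in the integral identity \eqref{F4} and thereby show that $(\bfU_n^0, \lambda_n^0(\eta))$ satisfies the variational problem \eqref{B4}. The left-hand side of \eqref{F4} converges to $(\nabla_x \bfU_n^0, \nabla_x v^0)_Q$ by \eqref{F8}, while the right-hand side converges to $\lambda_n^0(\eta)(\bfU_n^0, v^0)_Q$ by \eqref{F4a}; hence
\begin{equation*}
(\nabla_x \bfU_n^0, \nabla_x v^0)_Q = \lambda_n^0(\eta)(\bfU_n^0, v^0)_Q
\end{equation*}
for every smooth $v^0$ on $\overline Q$. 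Since such functions are dense in $H^1(Q)$, the identity extends to all $v^0 \in H^1(Q)$, which is precisely \eqref{B4}. So $\lambda_n^0(\eta)$ is an eigenvalue of the Neumann problem \eqref{B1}--\eqref{B2} with eigenfunction $\bfU_n^0$, provided $\bfU_n^0 \neq 0$.

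The normalization claim $\|\bfU_n^0; L^2(Q)\| = 1$ I would obtain from the strong $L^2(Q)$-convergence $\bfU_n^{\ve_p} \to \bfU_n^0$ in \eqref{FL}: a change of variables analogous to \eqref{F5} gives $\|\bfU_n^{\ve}; L^2(Q)\|^2 = (1-2\ve\ell_1^{-1})^{-1}(1-2\ve\ell_2^{-1})^{-1}\|U_n^\ve; L^2(A_\ve Q)\|^2$, and since $A_\ve Q \subset Q$ with $|Q \setminus A_\ve Q| = O(\ve)$, combining \eqref{20} with the uniform $H^1$-bound (hence uniform $L^2$-bound) on $U_n^\ve$ shows $\|\bfU_n^{\ve}; L^2(Q)\| \to 1$. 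Strong convergence then forces $\|\bfU_n^0; L^2(Q)\| = 1$; in particular $\bfU_n^0$ is nontrivial, so the previous paragraph applies.

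The one genuine subtlety — and the step I expect to require the most care — is the identification of \emph{which} Neumann eigenvalue $\lambda_n^0(\eta)$ equals, i.e. matching the index $n$ on both sides. The convergence \eqref{FL} a priori only tells us $\lambda_n^0(\eta) \in \{\mu_1, \mu_2, \dots\}$; to conclude $\lambda_n^0(\eta) = \mu_n$ (which is what Theorem \ref{th1} ultimately needs) one must rule out collapsing or crossing of indices. The standard device is to run the argument simultaneously for all indices up to $N$: the functions $\bfU_1^{\ve_p}, \dots, \bfU_N^{\ve_p}$ are $L^2$-orthonormal by \eqref{20} (up to the $O(\ve)$ Jacobian correction), so their strong $L^2$-limits $\bfU_1^0, \dots, \bfU_N^0$ are again orthonormal, hence linearly independent; together with $\lambda_1^0(\eta) \le \dots \le \lambda_N^0(\eta)$ and the fact that each $\lambda_k^0(\eta)$ is a Neumann eigenvalue, a dimension count against the ordered Neumann spectrum \eqref{B5} forces $\lambda_k^0(\eta) \le \mu_k$; combined with the reverse inequality coming from the max-min principle applied to trial functions built from the explicit Neumann eigenfunctions \eqref{wn} (extended by the cut-off as in \eqref{F3}), one gets $\lambda_k^0(\eta) = \mu_k$ for all $k \le N$. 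Since $N$ is arbitrary, this pins down every index. For the present Proposition, however, it suffices to record that $\lambda_n^0(\eta)$ is a Neumann eigenvalue with eigenfunction $\bfU_n^0$ of unit norm, and the sharper index identification is deferred to Section \ref{sec2.4}.
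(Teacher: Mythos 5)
Your identification of the eigenpair is exactly the paper's argument: pass to the limit in \eqref{F4} via \eqref{F4a} and \eqref{F8} and use density of smooth functions, so that part is fine. The genuine gap is in your normalization step. You claim that $\|\bfU_n^{\ve};L^2(Q)\|\to 1$ follows from the change of variables together with ``$|Q\setminus A_\ve Q|=O(\ve)$ and the uniform $H^1$-bound (hence uniform $L^2$-bound) on $U_n^\ve$''. Two problems: first, a uniform $L^2(Q)$-bound on a family of $\ve$-dependent functions does not prevent concentration of an order-one amount of $L^2$ mass on the thin frame $Q\setminus Q_{2\ve}$, so small measure alone gives nothing; second, the uniform $H^1(Q)$-bound on $U_n^\ve$ that would rescue this (via Sobolev embedding and H\"older) is simply not available. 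The a priori estimate \eqref{F1} controls $\nabla_x U_n^\ve$ on the frame only with the degenerate weight $\ve^{2\gamma}$, so all one gets there is $\|\nabla_x U_n^\ve;L^2(Q\setminus Q_\ve)\|=O(\ve^{-\gamma})$, which blows up; the uniform $H^1$-bound stated in the paper concerns the dilated function $\bfU_n^\ve$, i.e.\ $U_n^\ve$ restricted to $Q_{2\ve}$, and says nothing about the frame. This is precisely the point the paper treats with the dedicated inequality \eqref{F10}: a scaled Poincar\'e/trace-type estimate on the $O(\ve)$-thin frame bounding $\|U_n^\ve;L^2(Q\setminus Q_{2\ve})\|^2$ by $c\,(\ve^2\|\nabla_x U_n^\ve;L^2(Q\setminus Q_{2\ve})\|^2+\ve\,\|U_n^\ve;L^2(\partial Q_{2\ve})\|^2)$, after which the weighted bound gives $\ve^2\cdot O(\ve^{-2\gamma})=O(\ve^{2(1-\gamma)})$ and a trace inequality on $\partial Q_{2\ve}$ gives $\ve\cdot O(1)$, so the frame mass is $O(\ve^{2(1-\gamma)}+\ve)\to 0$ because $\gamma<1$. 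The fact that your argument never uses the hypothesis $\gamma<1$ is the telltale sign that this step is missing; without an estimate of this kind one cannot exclude $\|\bfU_n^0;L^2(Q)\|<1$ (in particular $\bfU_n^0=0$), and the Proposition would collapse.

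Your third paragraph (identifying $\lambda_n^0(\eta)=\mu_n$ by orthonormality of the limits plus a max-min comparison) is correctly flagged as not needed for the Proposition; for what it is worth, the paper settles that index question in Section \ref{sec2.4} by a different route, namely a quasi-mode/resolvent estimate for the operator $\cK^\ve$ combined with the simplicity of the $\mu_n$, rather than by the dimension-count argument you sketch.
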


{\bf Proof} The fact that $(\lambda_n^0(\eta), \bfU_n^0)$ is the claimed eigenpair follows from the variational
formulation \eqref{B4}, the arbitrariness of the  choice of $v^0$, the density of smooth functions in 
the Sobolev space, the property \eqref{F4}, and the proven convergence in \eqref{F4a}, \eqref{F8}.

It suffices to verify the normalization of  $\bfU_n^0$.
To this end, we use the inequality
\begin{eqnarray}
\|U_n^\ve;L^2(Q\setminus Q_{2\ve})\|^2\leq c (\ve^2 \|\nabla_x U_n^\ve;L^2(Q\setminus Q_{2\ve})\|^2+\ve \|U_n^\ve; L^2(\partial Q_{2\ve})\|^2)  \label{F10}
\end{eqnarray}
which can be derived by covering the thin frame $Q\setminus Q_{2\ve}$ with sets of diameter $O(\ve)$,
see Fig.\,\ref{fig3}.a),b),
stretching local coordinate systems by a factor of magnitude $\ve^{-1}$ and applying  standard trace
inequalities
in two kinds of sets, see Fig.\,\ref{fig3}.c). For the right hand side of \eqref{F10} we use the 
inequalities 
\begin{eqnarray}
\|U_n^\ve; L^2(\partial Q_{2\ve})\|^2
& \leq &  C (\|\nabla_x U_n^\ve;L^2(Q_{2\ve})\|^2+\|U_n^\ve; L^2(Q_{2\ve})\|^2)
\nonumber \\
& \leq &
C(\Lambda_n^\ve(\eta)+1)\|U_n^{\ve};L^2(Q)\|^2\leq C_n,
\nonumber \\
 \|\nabla_x U_n^\ve; L^2(Q\setminus Q_{2\ve})\|^2
& \leq& \ve^{-2 \gamma} \|\sqrt{a^\ve}\nabla_x U_n^\ve; L^2(Q)\|^2\leq C_n \ve^{-2 \gamma} , \nonumber
\end{eqnarray}
which are based on the estimate \eqref{F1} and the definition of $a^\ve$. As a consequence of
\eqref{20}, \eqref{F10}  and $\gamma < 1$ we get the desired normalization
$$
1=\|U_n^\ve; L^2(Q)\|^2=\|U_n^\ve;L^2(Q_{2\ve})\|^2+O(\ve^{2(1- \gamma) } + \ve)\to \|{\bf U}_n^0;L^2(Q)\|^2. 
$$

\begin{figure}[t]
\begin{center}
\includegraphics[scale=0.8]{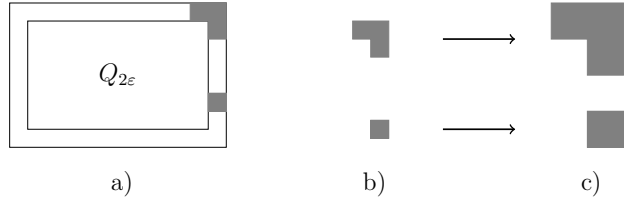}
\end{center}
\caption{The division of the thin frame.}
\label{fig3}
\end{figure}

\bigskip

Formula \eqref{UUn} passes the strong convergence in $L^2(Q)$, see \eqref{FL}, from $\bfU_n^\ve$
to the eigenfunction $U_n^\ve$ itself.

Since the limits of the eigenvalues in \eqref{19} belong to the set $\{\mu_n\}_{n\in\bbN}$
of isolated points, one finds any prescribed
number of open gaps in the spectrum \eqref{21} by assuming the parameter $\ve$ to be sufficiently small (a similar conclusion on the number of spectral bands is made in \cite{Zhik} for the problem introduced in \cite{Hempel}, and the 
same conclusion can be made in \cite{na461}, too).

\subsection{Asymptotics and estimates for spectral bands.}
\label{sec2.4}
In the Hilbert space $\cH^\ve = H^1_\eta (Q)$ we introduce
the scalar product
\begin{equation}
\label{N1}
\langle u,v \rangle_\ve=(a^\ve \nabla_x u, \nabla_x v)_{Q}+(u,v)_{Q}
\end{equation}
and the positive, symmetric, continuous (consequently, self-adjoint) operator $\cK^\ve$,
\begin{equation}
\label{N2}
\langle \cK^\ve u, v\rangle_\ve=(u,v)_Q \quad \forall u,v\in \cH^\ve.
\end{equation}
Comparing \eqref{N1}, \eqref{N2} with \eqref{18}, we see that the variational formulation
of the problem \eqref{15}-\eqref{17} is equivalent to
the abstract equation
\begin{equation*}
\cK^\ve u^\ve=\kappa^\ve u^\ve \mbox{ in } \cH^\ve
\end{equation*}
with the new spectral parameter
\begin{equation}
\label{N4}
\kappa^\ve=(1+\Lambda^\ve)^{-1}\,.
\end{equation}

The well-known formula
\begin{equation}
\label{KK}
\dist (k^\ve, \sigma(\cK^\ve))=\|(\cK^\ve-k^\ve)^{-1}; \cH^\ve\to \cH^\ve\|^{-1},
\end{equation}
follows from the spectral decomposition of the resolvent $(\cK^\ve-k^\ve)^{-1}$,
e.g, \cite[Ch 6, \S3]{BiSo},  \cite[Thm. 12.23]{Rud}.
To estimate the operator norm of the resolvent at the ``interesting'' point $k^\ve=(1+\mu_n)^{-1}$,
we set $\cW^\ve=\|w^\ve; \cH^\ve\|^{-1}w^\ve$, where
$w^\ve(x)=X^\ve(x)w_n(A_\ve^{-1}x)$, $\mu_n$ is an eigenvalue of the problem \eqref{B1}, \eqref{B2},
and the corresponding eigenfunction $w_n$ is extended to the exterior of $Q$ by its formula \eqref{wn}. We have
\begin{eqnarray}
& &
\|\cK^\ve \cW^\ve-k^\ve{\cW}^\ve;\cH^\ve\|= \sup\big|\langle \cK^\ve {\cW}^\ve-
{k}^\ve{\cW}^\ve,v\rangle_\ve\big| \nonumber \\
&  = & (1+\mu_n)^{-1}\| w^\ve; \cH^\ve\|^{-1} \sup\big|(a^\ve \nabla_x w^\ve, \nabla_x
v)_Q-\mu_n(w^\ve, v)_Q \big| ,\label{N6}
\end{eqnarray}
where the supremum is computed over the unit ball in $\cH^\ve$.
The expression inside the modulus  signs in \eqref{N6}
equals the sum of the following terms:
\begin{eqnarray}
&&I_1^\ve=(\nabla_x(w_n\circ A_\ve^{-1}),\nabla_x v)_{Q_{2\ve}}-\mu_n (w_n\circ
A_\ve^{-1},v)_{Q_{2\ve}},\nonumber\\
\label{NII}
&&I_2^\ve=(\sqrt{a^\ve}\nabla_x (w_n^\ve\circ A_\ve^{-1}),\sqrt{a^\ve}\nabla_x v)_{Q_{\ve}\setminus
Q_{2\ve}} \nonumber \\ 
& & \ \ \  - \mu_n (w_n^\ve\circ A_\ve^{-1},v)_{Q_{\ve}\setminus Q_{2\ve}},\\
&&I_3^\ve=\ve^{2\gamma} (\nabla_x (X_\ve w_n\circ A_\ve^{-1}),\nabla_x v)_{Q\setminus Q_{\ve}}-\mu_n
(X_\ve w_n\circ A_\ve^{-1},v)_{Q\setminus Q_{\ve}}.
\nonumber
\end{eqnarray}
Stretching variables and taking \eqref{B4} into account yield
$$
|I_1^\ve|=\big|2\ve\ell_2^{-1} (\partial_{x_1} w_n^{\ve},\partial_{x_1} (v\circ
A_\ve))_Q+2\ve\ell_1^{-1}
(\partial_{x_2} w_n^{\ve},\partial_{x_2} (v\circ A_\ve))_Q\big|\leq c_n \ve\,.
$$
Since $w_n$ is a smooth function, we have
$$
|I_2^\ve|\leq c_n |Q_\ve\setminus Q_{2\ve}| \,  \|v; \cH^\ve\|\leq c_n \ve^{1/2}.
$$
In the same way, taking into account the bound for $\nabla_x X_\ve$ in \eqref{FX},
we obtain
$$
|I_3^\ve|\leq C_n (\ve^{\gamma}\ve^{-1}\ve^{1/2}\|\ve^{\gamma}\nabla_x v; L^2(Q\setminus Q_{\ve})\|+\ve^{1/2} \|v;L^2(Q)\|)\leq c_n \ve^{\gamma-1/2}.
$$

These estimates for the  terms in \eqref{NII} and  \eqref{N6} show that
the norm of the resolvent $(\cK^\ve-k^\ve)^{-1}$ exceeds $c_n \ve^{-(\gamma-1/2)}$ for
some constant $c_n>0$. Thus, in view of the relation \eqref{KK}, the interval 
$$
[k^\ve-c_n\ve^{\gamma-1/2}, k^\ve+c_n \ve^{\gamma-1/2}]
$$
contains an eigenvalue of $\cK^\ve$. Furthermore, the identity  \eqref{N4} shows
that at least one eigenvalue
in \eqref{19} falls into the short segment $\Upsilon_n=[\mu_n-C_n \ve^{\gamma-1/2}, \mu_n+C_n \ve^{\gamma-1/2}]$ with some $C_n>0$ (recall that $\gamma>1/2$) .
To conclude that this eigenvalue is unique and coincides with $\Lambda_n^\ve(\eta)$, we use
Proposition 2.2. If one of the segments $\Upsilon_1, \Upsilon_2,\ldots, \Upsilon_n$ includes two
eigenvalues,
then $\Lambda_{n+1}^\ve(\eta)$ does not exceed $\mu_n+C_n\ve^{\gamma-1/2}$ and, therefore,
converges to $\Lambda_{n+1}^0(\eta)\leq \mu_n$, while the limit $U_{n+1}^0$ of the corresponding
eigenfunction is orthogonal to $w_1$, $w_2$, \ldots, $w_n$ in $L^2(Q)$. Of course this is
impossible because the eigenvalues $\mu_1$, $\mu_2$,\ldots, $\mu_n$ are
simple, due to our assumption on irrationality of $\ell_1^2\ell_2^{-2}$. This completes the
proof of Theorem 2.1.

\section{Asymptotic analysis of the spectrum for composite medium}
\label{sec3}

\subsection{Problem with periodic coefficients in half-planes}
\label{sec3.1}
Let us define the new coefficient function
\begin{equation}
\label{L1}
\bfa^\ve(x)=
\left\{
\begin{array}{l}
a^{\ve} (x_1-h, x_2), \quad x_1>0,\\
a^\ve (x_1+h, x_2), \quad x_1<0,
\end{array}
\right.
\end{equation}
where $h\in (0,\ell_1)$ and the numbers $\ell_j$ are rescaled as $\ell_2=1/2$ and  $\ell_1 >1/2$. Here, we realize the reflection on Fig. \ref{fig2},b.
The  geometric setting is simple enough so that the function  \eqref{L1} can be made smooth
by a proper choice of the old one \eqref{7} inside the thin frame $Q_{2\ve}\setminus Q_\ve$ (for example, $a^\varepsilon$ is independent of $x_1\in (-\ell_1 +3\varepsilon, \ell_1 +3\varepsilon$)). The
difference between  \eqref{6} and the new  equation
\begin{equation}
\label{L2}
-\div (\bfa^\ve (x) \nabla_x \bfu^\ve (x))=\bfla^\ve \bfu^\ve(x),\quad x\in \bbR^2,
\end{equation}
is the loss of the periodicity in the $x_1$-direction due to the coefficient \eqref{L1}:
as indicated in Fig.\,\ref{fig2}.b), the two half-planes, which are 
paved with identical rectangles of size $2\ell_1\times 2\ell_2$, are now separated by a column
of rectangles of size $2(\ell_1+h)\times 2\ell_2$.

Let us denote by $\cT(\ve)$ the self-adjoint operator of the problem \eqref{L2},
defined in the same way as in Section \ref{sec1.2}.

\subsection{Partial FBG-transform and model problem in the unit strip.}
\label{sec3.2}
Let us examine the spectrum of the problem \eqref{L2}. To this end, we apply
the partial FBG-transform
\begin{equation*}
\bfu^\ve(x)\mapsto \bfU^\ve(x;\zeta)=\frac{1}{\sqrt{2\pi}}\sum_{k\in\bbZ} e^{-i\zeta k} \bfu^\ve(x_1,x_2+k), \quad \zeta\in[0,2\pi]
\end{equation*}
and arrive at the model problem in the horizontal unit strip $\Pi=\bbR \times (-1/2, 1/2)$
\begin{equation}
\label{L4}
\begin{array}{l}
-\div (\bfa^\ve(x)\nabla_x \bfU^\ve(x;\zeta))=\bfLa^\ve \bfU^\ve(x,\zeta), \quad x\in \Pi,
\\
\bfU^{\ve}\big(x_1,\frac{1}{2};\zeta\big)=e^{i\zeta} \bfU^{\ve}\big(x_1,-\frac{1}{2};\zeta\big), \quad x_1\in\bbR,
\\
\partial_{x_2}\bfU^{\ve}\big(x_1, \frac{1}{2};\zeta\big)=e^{i\zeta} \partial_{x_2}\bfU^{\ve}\big(x_1,-\frac{1}{2};\zeta\big), \quad x_1\in\bbR.
\end{array}
\end{equation}

It is known, see \cite[Thm. 5]{na17}, that for any fixed $\zeta$
the essential spectrum of the problem \eqref{L4} is the union of the spectral bands
$$
\bfB_k^\ve(\zeta)=\{\Lambda_k^\ve(\eta_1,\zeta)\, : \,
\eta_1\in[-\pi,\pi]\}
, \quad k\in \bbN.
$$
Moreover, there holds the relations
$
\bfB_k^\ve(\zeta) \subset \beta_k^\ve$, $k\in \bbN .
$

The variational formulation of the problem \eqref{L4} is
$$
(\bfa^\ve\nabla_x \bfU^\ve,\nabla_x \bfV)_\Pi= \bfLa^\ve(\zeta)(\bfU^\ve,\bfV)_\Pi \quad \forall \,
\bfV\in \bfH^1_\zeta(\Pi) ,
$$
where $\bfH^1_\zeta(\Pi)$ is the space of functions in $H^1(\Pi)$
satisfying the first quasiperiodicity condition in \eqref{L4}.

\subsection{Asymptotics of eigenvalues and trapped modes in the strip.}
\label{sec3.3}
The appearance of the longer rectangle $\bfQ=\bfQ_1=(-\ell_1-h,\ell_1+h)\times (-1/2,1/2)$ in the
paving of $\Pi$ leads to the new limit problem
\begin{equation}
\label{L5}
-\Delta_x \bfw(x)=\bfmu \bfw(x) \mbox{ in }  \bfQ \, , \quad \partial_\nu \bfw(x)=0
\mbox{ in }   \partial\bfQ\,.
\end{equation}
The first positive eigenvalue of this problem is  $\bfmu_2=\frac{\pi^2}{4}(\ell_1+h)^{-2}$,
corresponding to the eigenfunction $\bfw_2(x)=(\ell_1+h)^{-1/2} \sin (\pi(\ell_1+h)^{-1} x_1/2)$.
Notice that  \eqref{B5} implies
\begin{equation}
\label{L6}
\bfmu_2\in (\mu_1,\mu_2)\,.
\end{equation}

\begin{theorem}
\label{ThL}
For any $\zeta\in[-\pi,\pi]$ there exist positive $\bfve_2$ and $\bfc_2$ such that the problem
\eqref{L4} has an eigenvalue $\bfLa_2^\ve(\zeta)$ satisfying the inequality
\begin{equation}
\label{L7}
|\bfLa_2^\ve(\zeta)-\bfmu_2|\leq \bfc_2 \ve^{\gamma-1/2}\quad \forall \, \ve\in(0,\bfve_2).
\end{equation}
\end{theorem}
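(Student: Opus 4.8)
The plan is to adapt the resolvent-estimate argument of Section~\ref{sec2.4} to the model problem \eqref{L4} on the strip $\Pi$, using the new limit problem \eqref{L5} on the longer rectangle $\bfQ$. The key structural observation is that the coefficient $\bfa^\ve$ defined in \eqref{L1} still equals $1$ on a dilated copy of the interior $\bfQ_{2\ve}$ and equals $\ve^{2\gamma}$ on the thin frame surrounding $\bfQ$, so the very same asymptotic mechanism — massive hard core separated by soft mortar — is at work, except that now the relevant ``cell'' is the longer rectangle $\bfQ=(-\ell_1-h,\ell_1+h)\times(-1/2,1/2)$ sitting in the middle of the paving of $\Pi$.

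First I would set up the Hilbert space $\cH^\ve=\bfH^1_\zeta(\Pi)$ with the scalar product $\langle u,v\rangle_\ve=(\bfa^\ve\nabla_x u,\nabla_x v)_\Pi+(u,v)_\Pi$, define the operator $\bfK^\ve$ by $\langle \bfK^\ve u,v\rangle_\ve=(u,v)_\Pi$ exactly as in \eqref{N1}--\eqref{N2}, and note that \eqref{L4} is equivalent to $\bfK^\ve\bfu^\ve=\kappa^\ve\bfu^\ve$ with $\kappa^\ve=(1+\bfLa^\ve)^{-1}$. Then I would take the quasimode $\bfw^\ve(x)=X^\ve(x)\,\bfw_2(A_\ve^{-1}x)$, where $\bfw_2$ is the eigenfunction of \eqref{L5} associated with $\bfmu_2=\tfrac{\pi^2}{4}(\ell_1+h)^{-2}$, extended by its sine formula outside $\bfQ$, with $X^\ve$ a cut-off supported near $\bfQ$ and vanishing on the frame; crucially $\bfw_2$ is constant in $x_2$, so it automatically satisfies the quasiperiodicity condition in \eqref{L4} for $\zeta=0$, and for general $\zeta$ a minor adjustment (or a $\zeta$-dependent modulation in $x_2$, paid for by a term of order $\ve$) handles the boundary condition. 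Testing $\bfK^\ve\bfW^\ve-k^\ve\bfW^\ve$ against the unit ball of $\cH^\ve$ at the point $k^\ve=(1+\bfmu_2)^{-1}$ reduces, as in \eqref{N6}, to bounding $(\bfa^\ve\nabla_x\bfw^\ve,\nabla_x v)_\Pi-\bfmu_2(\bfw^\ve,v)_\Pi$, which splits into three integrals $I_1^\ve,I_2^\ve,I_3^\ve$ over $\bfQ_{2\ve}$, over the transition zone $\bfQ_\ve\setminus\bfQ_{2\ve}$, and over the frame $\Pi\setminus\bfQ_\ve$, estimated exactly as in Section~\ref{sec2.4} to be $O(\ve)$, $O(\ve^{1/2})$ and $O(\ve^{\gamma-1/2})$ respectively. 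By the distance formula \eqref{KK} the interval $[k^\ve-\bfc_2\ve^{\gamma-1/2},k^\ve+\bfc_2\ve^{\gamma-1/2}]$ meets $\sigma(\bfK^\ve)$, and translating back via $\kappa^\ve=(1+\bfLa^\ve)^{-1}$ gives an eigenvalue $\bfLa_2^\ve(\zeta)$ of \eqref{L4} within $\bfc_2\ve^{\gamma-1/2}$ of $\bfmu_2$.

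One point requiring care is that $\sigma(\bfK^\ve)$ is not purely discrete — the operator for the strip problem \eqref{L4} has essential spectrum equal to $\bigcup_k\bfB_k^\ve(\zeta)$, so a priori the resolvent blow-up could be produced by a point of the essential spectrum rather than by a genuine eigenvalue. This is resolved by the inclusions $\bfB_k^\ve(\zeta)\subset\beta_k^\ve$ together with Theorem~\ref{th1}: for $\ve$ small the bands $\beta_1^\ve,\beta_2^\ve,\dots$ are concentrated near $\mu_1,\mu_2,\dots$, and since \eqref{L6} places $\bfmu_2$ strictly between $\mu_1$ and $\mu_2$, the point $\bfmu_2$ (and a fixed neighbourhood of it) lies in a spectral gap of the essential spectrum of \eqref{L4} once $\ve<\bfve_2$. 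Hence the spectral point detected near $\bfmu_2$ is necessarily a discrete eigenvalue — a trapped mode localized near the anomalous column $\bfQ$ — which is precisely the Rayleigh-type wave the paper is after.

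The main obstacle, then, is not the resolvent estimate (which is a close replica of Section~\ref{sec2.4}) but the bookkeeping needed to guarantee that the detected eigenvalue sits in the gap uniformly in $\zeta\in[-\pi,\pi]$: one must check that the constants $\bfve_2$ and $\bfc_2$ can be chosen independently of $\zeta$, which follows because the quasimode $\bfw^\ve$ and all the error estimates depend on $\zeta$ only through the harmless $x_2$-modulation, whose contribution is $O(\ve)$ uniformly, and because the separation between $\bfmu_2$ and the bands $\beta_1^\ve,\beta_2^\ve$ furnished by Theorem~\ref{th1} is $\zeta$-independent. With this uniformity in hand, \eqref{L7} follows for all $\zeta\in[-\pi,\pi]$, completing the proof.
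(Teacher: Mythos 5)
Your proposal is correct and is essentially the paper's own argument: the same quasimode $\bfX^\ve(x)\,\bfw_2(\bfA_\ve^{-1}x)$ tested against the unit ball of $\bfH^1_\zeta(\Pi)$ equipped with the scalar product $\langle\cdot,\cdot\rangle_{\Pi,\ve}$, the same three-term splitting and estimates carried over verbatim from Section~\ref{sec2.4}, and the distance-to-spectrum formula \eqref{KK} with $\kappa^\ve=(1+\bfLa^\ve)^{-1}$. The one step you make explicit --- excluding that the detected spectral point belongs to the essential spectrum of \eqref{L4} by combining $\bfB_k^\ve(\zeta)\subset\beta_k^\ve$, Theorem~\ref{th1} and \eqref{L6} --- is left implicit in the paper's one-line proof and is a worthwhile clarification, while your $\zeta$-dependent modulation of the quasimode is unnecessary, since the cut-off makes it vanish near $x_2=\pm 1/2$, so the quasiperiodicity conditions hold trivially (and uniformly) for every $\zeta$.
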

\begin{proof}
We set $\bfW(x)=\bfX^\ve(x)\bfw(\bfA_\ve^{-1}x)$, where
$\bfX^{\ve}$ is a smooth cut-off function such that
\begin{equation*}
\bfX^\ve=1 \mbox{ in } \bfQ_\ve,\quad \bfX^\ve=0 \mbox{ in } \bfQ\setminus \bfQ_{\ve/2},
\quad  |\nabla_x \bfX^\ve|\leq C_\bfX \ve^{-1}  \mbox{ in } \bfQ ,
\end{equation*}
and $\bfA_{\ve}x =((1-2\ve(\ell_{1}+h)^{-1})x_1,(1-2\ve \ell_2^{-1})x_2)$. It is enough to estimate
\begin{equation*}
\sup\big|(\bfa^\ve \nabla_x \bfw, \nabla_x \bfv)_\Pi-\bfmu (\bfW, \bfv)_\Pi \big|=\sup\big|(\bfa^\ve \nabla_x \bfW, \nabla_x \bfv)_{\bfQ}-\bfmu
(\bfW, \bfv)_{\bfQ} \big|
\end{equation*}
where the supremum is computed over  the unit ball of the Hilbert space $\bfH^1_\zeta(\Pi)$
with the scalar product
$$
\langle \bfu,\bfv \rangle_{\Pi,\ve}= (\bfa^\ve \nabla_x \bfu,\nabla_x \bfv)_{\Pi}+(\bfu,\bfv)_{\Pi}\,.
$$
This can be done repeating word by word our arguments in the second part of the proof of Theorem 2.1 in Section \ref{sec2.4}.
\end{proof}

Comparing formulas \eqref{L7}, \eqref{L6} and \eqref{BB}, we see that
if $\ve$ is small enough, the spectrum  $\sigma(\cT(\ve))$ of the problem \eqref{L2} contains, in addition to the spectral
bands $\beta_k^\ve$ of the spectrum $\sigma(\cT^0(\ve))$, at least one spectral band
\begin{equation}
\label{Bbold}
\textbf{B}_2^\ve=\{\Lambda_2^\ve(\zeta) \, : \,  \zeta\in[-\pi,\pi]\}
\end{equation}
which does not intersect the set $\sigma(\cT^0(\ve))$. This observation gives a
negative answer to the question \eqref{5} in Section \ref{sec1.1}.

\begin{theorem}\label{thmain}
There exists positive $\bfve_0$ such that, for any $\varepsilon\in (0,\bfve_0)$, the spectrum $\sigma_{\rm ess}(\cT(\ve))$ of the problem \eqref{L2} contains the spectral band \eqref{Bbold} which does not intersect the spectrum $\sigma_{\rm ess}(\cT^0(\ve))$ of the problem \eqref{6}.
\end{theorem}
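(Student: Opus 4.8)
The plan is to obtain the statement from Theorems~\ref{th1} and~\ref{ThL} by resolving $\cT(\ve)$ into its Floquet fibers in the single remaining periodic direction. \textbf{Step 1 (fiber decomposition).} The coefficient $\bfa^\ve$ of \eqref{L1} is still $1$-periodic in $x_2$, so $\cT(\ve)$ commutes with the shift $x_2\mapsto x_2+1$, and the partial FBG transform of Section~\ref{sec3.2} identifies it unitarily with the direct integral $\int^\oplus_{[-\pi,\pi]}\cT(\ve;\zeta)\,d\zeta$, where $\cT(\ve;\zeta)$ is the self-adjoint operator of the strip problem \eqref{L4}. By the standard spectral theory of such decompositions (cf. \cite{Gel} and \cite[Thm.~5]{na17}), $\sigma(\cT(\ve))=\overline{\bigcup_{\zeta\in[-\pi,\pi]}\sigma(\cT(\ve;\zeta))}$. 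I would also record here that $\cT(\ve)$ and $\cT^0(\ve)$ each commute with a non-compact group of lattice translations, hence have no $L^2(\bbR^2)$-eigenfunctions (a Bloch-type eigenfunction would have translation-invariant modulus and therefore vanish); consequently $\sigma_{\rm ess}(\cT(\ve))=\sigma(\cT(\ve))$ and, by \eqref{21}, $\sigma_{\rm ess}(\cT^0(\ve))=\sigma(\cT^0(\ve))=\bigcup_{k\in\bbN}\beta_k^\ve$.

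\textbf{Step 2 (the defect eigenvalue sits in a true gap).} Fix $\ve$ and $\zeta$. The essential spectrum of $\cT(\ve;\zeta)$ equals $\bigcup_k\bfB_k^\ve(\zeta)$ with $\bfB_k^\ve(\zeta)\subset\beta_k^\ve$ (Section~\ref{sec3.2}); Theorem~\ref{th1} confines $\beta_k^\ve$ to $[\mu_k-c_k\ve^{\gamma-1/2},\mu_k+c_k\ve^{\gamma-1/2}]$; and Theorem~\ref{ThL} yields an eigenvalue $\bfLa_2^\ve(\zeta)$ of $\cT(\ve;\zeta)$ with $|\bfLa_2^\ve(\zeta)-\bfmu_2|\le\bfc_2\ve^{\gamma-1/2}$. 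Put $2\delta=\min\{\bfmu_2-\mu_1,\ \mu_2-\bfmu_2\}$, which is a fixed positive number by \eqref{L6}, and choose $\bfve_0\in(0,\bfve_2]$, $\bfve_0\le\min\{\ve_1,\ve_2,\ve_3\}$, so small that for $\ve\in(0,\bfve_0)$ one has $(\bfc_2+c_1+c_2)\ve^{\gamma-1/2}<\delta$ and $\bfmu_2+\bfc_2\ve^{\gamma-1/2}<\mu_3-c_3\ve^{\gamma-1/2}$. Then the closed interval $J^\ve=[\bfmu_2-\bfc_2\ve^{\gamma-1/2},\,\bfmu_2+\bfc_2\ve^{\gamma-1/2}]$ is disjoint from $\beta_1^\ve$ and from $\beta_2^\ve$, and from every $\beta_k^\ve$ with $k\ge3$ because the monotonicity \eqref{19} and Theorem~\ref{th1} give $\min\beta_k^\ve\ge\min\beta_3^\ve\ge\mu_3-c_3\ve^{\gamma-1/2}$. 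Since $\textbf{B}_2^\ve\subset J^\ve$, this already gives $\textbf{B}_2^\ve\cap\sigma_{\rm ess}(\cT^0(\ve))=\emptyset$, and it also shows that each $\bfLa_2^\ve(\zeta)$ lies in the open gap $(\max\beta_1^\ve,\min\beta_2^\ve)$ of $\bigcup_k\bfB_k^\ve(\zeta)$, hence is an isolated eigenvalue of finite multiplicity of the fiber $\cT(\ve;\zeta)$.

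\textbf{Step 3 (continuity in $\zeta$ and conclusion).} After the gauge transformation $\bfU\mapsto e^{-i\zeta x_2}\bfU$ the fibers $\{\cT(\ve;\zeta)\}_{\zeta\in[-\pi,\pi]}$ form a self-adjoint holomorphic family of type~(A), so for $\ve<\bfve_0$ the eigenvalues situated in the fixed gap around $\bfmu_2$ — in particular $\bfLa_2^\ve(\zeta)$ — depend continuously on $\zeta$ (cf. \cite[Ch.~VII]{Kato}); thus $\textbf{B}_2^\ve=\{\bfLa_2^\ve(\zeta):\zeta\in[-\pi,\pi]\}$ is a compact set. Continuity forces a neighbourhood of any $\zeta_0$ to be mapped into a neighbourhood of $\bfLa_2^\ve(\zeta_0)$, so $\bfLa_2^\ve(\zeta_0)$ cannot be an interior point of the resolvent set, and the direct-integral formula of Step~1 gives $\textbf{B}_2^\ve\subset\sigma(\cT(\ve))=\sigma_{\rm ess}(\cT(\ve))$. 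Together with Step~2 this is exactly the assertion, with $\bfve_0$ as fixed above.

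I expect the only delicate point to be the one isolated inside Step~2: establishing that $\bfLa_2^\ve(\zeta)$ is a \emph{discrete} eigenvalue of the fiber operator $\cT(\ve;\zeta)$, i.e. that it does not land inside one of the bands $\bfB_k^\ve(\zeta)$. This is what makes the band $\textbf{B}_2^\ve$ genuinely new and simultaneously legitimises the continuity argument of Step~3, and it relies entirely on the strict separation \eqref{L6} together with the choice of $\ve$ small. Everything else is bookkeeping built on the asymptotic estimates \eqref{BB} and \eqref{L7}, already proven.
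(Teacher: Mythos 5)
Your proposal is correct and takes essentially the same route as the paper: the paper's own proof is just the observation preceding the theorem, namely comparing \eqref{BB}, \eqref{L7} and \eqref{L6} together with the inclusion $\bfB_k^\ve(\zeta)\subset\beta_k^\ve$ and the band--gap formula \eqref{21}, exactly the comparison you carry out in Step 2. Your Steps 1 and 3 merely make explicit the standard FBG facts the paper leaves to the citations (absence of finite-multiplicity eigenvalues for the partially periodic operators, hence $\sigma_{\rm ess}=\sigma$, and continuity of the fibers in $\zeta$ so that the trapped-mode eigenvalues indeed fill a band of $\sigma_{\rm ess}(\cT(\ve))$), so they complement rather than diverge from the paper's argument.
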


It is quite obvious that using the techniques presented above one
could prove more  comprehensive results than Theorem \ref{ThL}. Indeed,
many of the open spectral gaps between bands $\beta_k^\ve$ apparently contain
eigenvalues of the limit problem \eqref{L5}. Each of these isolated eigenvalues
gives rise again for a small $\varepsilon$ to an eigenvalue of the problem \eqref{L4} and thus also to an additional
spectral band of the problem \eqref{L2}. However, for the sake of the shortness
of the paper we refrain from going into the detailed proofs, although we are
convinced that a more complete asymptotic description of the eigenvalues of the
problem \eqref{L4} would not require new ideas in addition to those given above.

\section{Concluding remarks}
\label{sec4}
The existence of Rayleigh waves \cite{Ray} travelling along interfaces in piecewise
homogeneous elastic solids is well-known, cf. \cite{KamKisel}, \cite{SO} and others.
Such waves do not exist in the case of scalar differential equations, the piecewise constant
coefficients  of which have jumps at a straight line of the plane. However, the example constructed
above shows  that scalar second order equations with periodic coefficients may have
propagating waves localized near infinite rows and columns of foreign inclusions.
This was already predicted in \cite{CaNaTa}.

Of course, changing the roles of coordinate axis as indicated in Fig.\,\ref{fig2}.c)
provides a row of bigger rectangles $\bfQ_2$ and also new spectral bands in the same way as in
Section 3.  Moreover, according to \cite{CaNaTa}, these bands are preserved in the spectra, if the open
waveguides containing a full  row or column of rectangles are replaced by the corresponding
semi-infinite open waveguides. Combining both of these constructions, we can
create ${\sf X}$-, ${\sf T}$- and ${\sf Y}$-shaped waveguides, which support propagating localized waves, cf. \cite{CaNaTa}.
We also mention the papers \cite{CaDuNa, Na, CaKh} with other examples of localized propagating waves.

Let us consider the ${\sf X}$-shaped open waveguide in  Fig.\,\ref{fig2}.d), which
contains the rectangle $\bfQ_{12}=\{x: |x_1|<\ell_1+h_1, |x_2|<\ell_2+h_2\}$.
The numbers $h_j\in (0,\ell_j)$ can be chosen such that the smallest positive eigenvalues of the Neumann 
problems \eqref{L5} in $\bfQ_j$, $j=1,2$, and that of the
problem \eqref{B1}, \eqref{B2} in $Q$ can be ordered as follows:
\begin{equation}
\label{NEW}
0<\frac{\pi^2}{(\ell_1+h_1)^2}<\frac{\pi^2}{(\ell_2+h_2)^2}<\frac{\pi^2}{\ell_1^2}\,.
\end{equation}
In addition, if $\ell_1<\sqrt{3}(\ell_2+h_2)$, then   $h_j$ can still be adjusted
to obtain
\begin{equation}
\label{mubold}
\frac{\pi^2}{(\ell_2+h_2)^2}<\bfmu_{1,2}=\frac{\pi^2}{(\ell_1+h_1)^2}+\frac{\pi^2}{(\ell_2+h_2)^2}<\frac{\pi^2}{\ell_1^2} ,
\end{equation}
where  $\bfmu_{1,2}$ is the Neumann eigenvalue for $-\Delta$ in $\bfQ_{12}$   corresponding
to the  eigenfunction
$$
\sin\big(\frac{\pi x_1}{\ell_1+h_1}\big)\sin\big(\frac{\pi x_2}{\ell_2+h_2}\big) .
$$
Now consider the spectral problem \eqref{L2}, where the coefficient $\bfa^\ve$
is related to the ${\sf X}$-shaped open waveguide  of Fig.\,\ref{fig2}.d).
According to \cite{CaNaTa} and the conclusions in Sections 2 and 3, the first four spectral bands
of this problem lie in the $c\ve^{\gamma-1/2}$-neighbourhood of the points \eqref{NEW}, although
$\bfmu_{1,2}$ is not contained in  these bands. Thus, our previous asymptotic constructions,
estimates and arguments prove that
there exists  
an isolated  eigenvalue in the vicinity of the point $\bfmu_{1,2}$.

\begin{proposition}
Let $\ell_j$ and $h_j$, $j=1,2$, be fixed to fulfil the relations \eqref{NEW} and \eqref{mubold}. There exists $\bfve _d >0$ such
that, for any $\varepsilon\in (0,\bfve_d)$, the discrete spectrum of the problem corresponding to 
the ${\sf X}$-shaped open waveguide in fig. \ref{fig2},d, contains at least one eigenvalue $\lambda_d(\ve)=\bfmu_{1,2} +O(\ve^{\gamma-1/2})$.
\end{proposition}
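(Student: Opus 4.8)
The strategy is to mimic, for the $\mathsf{X}$-shaped waveguide, the two-step scheme used for the open waveguides of Sections \ref{sec2} and \ref{sec3}: first locate the ``background'' essential spectrum, then detect an eigenvalue split off from it by the localized inclusion $\bfQ_{12}$. Concretely, the operator associated with the $\mathsf{X}$-shaped configuration is a perturbation of $\cT^0(\ve)$ that is non-compact along \emph{two} crossing columns/rows, but is genuinely compact in the remaining directions; so by the analogue of \cite[Thm. 5]{na17} its essential spectrum is the union of the essential spectra of the two associated ``cross'' problems treated in Sections \ref{sec3} and (by the symmetric construction indicated after Fig.\,\ref{fig2}.c) of \cite[Thm. 5]{na17} applied in the $x_2$-direction). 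Invoking \cite{CaNaTa} together with Theorems \ref{th1} and \ref{ThL} and their $x_2$-analogue, one sees that for $\ve$ small all of this essential spectrum lies in $\bigcup_k (c_k\ve^{\gamma-1/2}$-neighbourhoods of the points in \eqref{B5}$)\cup(\text{neighbourhoods of }\bfmu_2(\bfQ_1),\bfmu_2(\bfQ_2))$, and in particular the relations \eqref{NEW} place the first four bands in disjoint small neighbourhoods of $0$, $\pi^2(\ell_1+h_1)^{-2}$, $\pi^2(\ell_2+h_2)^{-2}$, $\pi^2\ell_1^{-2}$. By \eqref{mubold}, the value $\bfmu_{1,2}$ sits strictly between the third and fourth of these points, hence for $\ve$ small enough it lies in a fixed open interval that is disjoint from $\sigma_{\rm ess}$ of the $\mathsf{X}$-problem.

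\textbf{Second step: producing the eigenvalue.} Here I would run exactly the resolvent-estimate argument of Section \ref{sec2.4}. Pass to the Hilbert space $H^1(\bbR^2)$ equipped with the $\ve$-dependent scalar product $(\bfa^\ve\nabla_x u,\nabla_x v)_{\bbR^2}+(u,v)_{\bbR^2}$, introduce the bounded self-adjoint operator $\cK^\ve$ defined by $\langle\cK^\ve u,v\rangle_\ve=(u,v)_{\bbR^2}$, so that eigenpairs of the $\mathsf{X}$-problem correspond to eigenpairs of $\cK^\ve$ via $\kappa^\ve=(1+\lambda)^{-1}$, and use \eqref{KK}. As trial function take $\bfW^\ve(x)=\bfX^\ve(x)\,\bfw_{1,2}(\bfA_\ve^{-1}x)$, where $\bfw_{1,2}(x)=c\sin(\pi x_1/(\ell_1+h_1))\sin(\pi x_2/(\ell_2+h_2))$ is the normalized Neumann eigenfunction of $-\Delta$ on $\bfQ_{12}$ for $\bfmu_{1,2}$, $\bfA_\ve$ is the obvious anisotropic dilation mapping the inclusion onto its slightly shrunk copy, and $\bfX^\ve$ is a cut-off equal to $1$ on $\bfQ_{12,\ve}$, vanishing outside $\bfQ_{12,\ve/2}$, with $|\nabla\bfX^\ve|\le C\ve^{-1}$. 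Splitting $(\bfa^\ve\nabla_x\bfW^\ve,\nabla_x v)_{\bbR^2}-\bfmu_{1,2}(\bfW^\ve,v)_{\bbR^2}$ over the regions $\bfQ_{12,2\ve}$, $\bfQ_{12,\ve}\setminus\bfQ_{12,2\ve}$, $\bfR^2\setminus\bfQ_{12,\ve}$ exactly as in \eqref{NII} and bounding each piece as there — the first by $O(\ve)$ from the dilation, the second by $O(\ve^{1/2})$ from the thin frame, the third by $O(\ve^{\gamma-1/2})$ from the $\ve^{2\gamma}$ weight against the $\ve^{-1}$ gradient of the cut-off — gives $\|\cK^\ve\bfW^\ve-(1+\bfmu_{1,2})^{-1}\bfW^\ve;H^1\|\le c\,\ve^{\gamma-1/2}$ after normalization (the normalization $\|\bfW^\ve;L^2\|\to1$ is checked as in Proposition 2.2 via the scaled trace/Friedrichs inequality \eqref{F10} on the thin frame). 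Hence by \eqref{KK} the operator $\cK^\ve$ has spectrum within $c\,\ve^{\gamma-1/2}$ of $(1+\bfmu_{1,2})^{-1}$, i.e. the $\mathsf{X}$-problem has spectrum within $C\ve^{\gamma-1/2}$ of $\bfmu_{1,2}$.

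\textbf{Conclusion and main obstacle.} Combining the two steps: for $\ve<\bfve_d$ the point $\bfmu_{1,2}$ has a fixed-size neighbourhood disjoint from $\sigma_{\rm ess}(\cT)$, yet $\sigma(\cT)$ meets the $C\ve^{\gamma-1/2}$-neighbourhood of $\bfmu_{1,2}$; therefore that intersection point is an isolated point of finite multiplicity, i.e. a discrete eigenvalue $\lambda_d(\ve)=\bfmu_{1,2}+O(\ve^{\gamma-1/2})$, which is the claim. The one step that genuinely needs care — and which I expect to be the main obstacle — is the \emph{first} one: justifying that the essential spectrum of the $\mathsf{X}$-shaped operator is exactly the union of the two ``cross'' essential spectra and contains nothing new near $\bfmu_{1,2}$. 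For a single row this is \cite[Thm. 5]{na17}, and for two crossing rows it is covered by \cite{CaNaTa}; one must check that the hypotheses of those results apply to the present smooth high-contrast coefficient and that the quantitative band locations from Theorems \ref{th1} and \ref{ThL} (plus the $x_2$-symmetric version of Theorem \ref{ThL}) indeed force all relevant bands into the neighbourhoods listed in \eqref{NEW}, leaving the gap around $\bfmu_{1,2}$ open. Everything else is a verbatim repetition of the estimates already carried out in Sections \ref{sec2.4} and \ref{sec3.3}, now with the anisotropic rectangle $\bfQ_{12}$ in place of $Q$ or $\bfQ$.
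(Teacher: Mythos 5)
Your proposal follows essentially the same route as the paper: the paper also first invokes \cite{CaNaTa} together with the conclusions of Sections \ref{sec2} and \ref{sec3} to place the first spectral bands in $c\ve^{\gamma-1/2}$-neighbourhoods of the points \eqref{NEW}, so that \eqref{mubold} leaves $\bfmu_{1,2}$ in a gap of the essential spectrum, and then repeats the quasimode/resolvent estimate of Section \ref{sec2.4} with the cut-off, dilated Neumann eigenfunction of $\bfQ_{12}$ to produce a spectral point within $O(\ve^{\gamma-1/2})$ of $\bfmu_{1,2}$, necessarily a discrete eigenvalue. Your version is in fact more explicit than the paper's sketch (which only gestures at ``previous asymptotic constructions''), and you correctly flag the identification of the essential spectrum via \cite{CaNaTa} as the step the paper leaves to citation.
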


Recall that if $\ve$ is small, we have shown the existence of many open spectral gaps,
cf. for example the end of Section \ref{sec2.3}. It might be possible to find
also  other eigenvalues (of the problem related to Fig.\,\ref{fig2}.d) inside these gaps,
just by using the  above scheme to locate them  near suitable Neumann eigenvalues  of
the problem in $\bfQ_{12}$. However, the first couple of the positive
eigenvalues $\mu_j=\pi^2 (\ell_j+h_j)^{-2}$, $j=1,2$, coincides with the numbers in \eqref{NEW}
and therefore we do not know if they are included in the corresponding spectral bands or not.
In other words, to prove or disprove the existence of isolated eigenvalues near $\bfmu_j$ one would
need to  construct higher order terms in the asymptotic expansions.

An example of an eigenvalue embedded into the continuous spectrum of an open waveguide in
a double periodic medium does not yet exist in the literature.
We conjecture that this could be done using the concept of  enforced
stability of embedded eigenvalues,  \cite{na489, na546},
although  it would require a  much more delicate asymptotic analysis.


\subsection*{Acknowledgment}

F.B. was supported by grants 0.38.237.2014 and 6.57.61.2016 of St.Petersburg University, and by grant 15-01-02175 
of RFBR; S.N. by grant 15-01-02175 
of RFBR and by the Academy of Finland project 268973;  
G.C. is a member of GNAMPA of INDAM; J.T. was partially supported by the V\"{a}is\"{a}l\"{a} Foundation of 
the Finnish Academy of Sciences and Letters.

\end{document}